\newtheorem{theorem}{Theorem}
\newtheorem{lemma}{Lemma}
\newtheorem{proposition}{Proposition}
\newtheorem{definition}{Definition}
\newtheorem{example}{Example}
\newtheorem{remark}{Remark}
\title{On Lazard's Valuation\\ and CAD Construction}
\author[*]{Scott McCallum}
\author[**]{Hoon Hong}
\affil[*]{Department of Computing, Macquarie University, NSW 2109, Australia}
\affil[*]{scott.mccallum@mq.edu.au}
\affil[**]{Department of Mathematics, North Carolina State University, Raleigh NC 27695, USA}
\affil[**]{hong@ncsu.edu}
\date{}
\begin{document}

\maketitle


\begin{abstract}
In 1990 Lazard proposed an improved projection operation for cylindrical
algebraic decomposition (CAD). For the proof he introduced a certain notion of
valuation of a multivariate Puiseux series at a point.
However a gap in one of the key supporting
results for the improved projection was subsequently noticed. 
In this report we study a more limited but rigorous concept of Lazard's valuation:
namely, we study Lazard's valuation of a multivariate polynomial at a point.
We prove some basic properties of the limited Lazard valuation
and identify some relationships between valuation-invariance and order-invariance.
\end{abstract}


\section{Introduction}

In 1990 Lazard \cite{Lazard:94} proposed an improved projection operation
for cylindrical algebraic decomposition (CAD) which is based on a certain
notion of valuation of a multivariate fractional meromorphic series at a
point. Inherent in \cite{Lazard:94} is the related notion of the
valuation-invariance of an $n$-variate fractional meromorphic series in a
subset of Euclidean $n$-space $\mathbb{R}^n$. Lazard's proposed approach is
in contrast with that of McCallum \cite{McCallum:88, McCallum:98} which is
based on the concept of the order (of vanishing) of a multivariate
polynomial or analytic function at a point, and the related concept of
order-invariance. However a gap in one of the key supporting results of \cite%
{Lazard:94} was subsequently noticed \cite{Collins:98, Brown:01}. 
This is disappointing because Lazard's proposed approach has some advantages
over other methods.

In \cite{HongMcCallum:14a} we study Lazard's projection.
It is shown there that
Lazard's projection is valid for CAD construction for so-called
well-oriented polynomial sets. The key underlying results relate to
order-invariance rather than valuation-invariance; and the validity proof
builds upon existing results concerning improved projection. While this is
an important step forward we regard it as only a partial validation of
Lazard's approach since the method is not proved to work for non
well-oriented polynomials and it does not involve valuation-invariance.
However it does confirm the soundness of the intuition behind Lazard's
proposed projection. 

In this report we study separately a
more limited but rigorous concept of Lazard's valuation: namely, we study
Lazard's valuation of a multivariate polynomial at a point. This study was
motivated by the hope of remedying more completely the defect of \cite{Lazard:94}.
Section 2 clarifies the
notion and basic properties of the more limited concept of Lazard's
valuation, and identifies some relationships between valuation-invariance and
order-invariance.
Section 3 recalls Lazard's main claim concerning valuation-invariance and CAD. 
His main claim is proved for the special case $n = 3$ under a slightly
stronger hypothesis.
Section 4 discusses some of the
difficulties involved with attempting rigorously to extend the limited
concept of valuation to multivariate fractional meromorphic series.

\section{Definition and basic properties of Lazard's valuation (limited)}

In this section we study Lazard's valuation \cite{Lazard:94}
in a relatively special setting, namely, that of multivariate polynomials
over a field. We shall clarify the notion and basic properties of this
special valuation,
and identify some relationships between valuation-invariance and
order-invariance.

We first define the Lazard valuation in a limited way. This will
allow us to provide simple, straightforward proofs of some basic properties
which are sufficient for some limited uses of this concept.

We recall at the outset the standard algebraic definition of the term
valuation \cite{AtiyahMacDonald, ZariskiSamuel:60}. A mapping $v~:~K -
\{0\}~\rightarrow~\Gamma$ from the multiplicative group of a field $K$ into
a totally ordered abelian group (written additively) $\Gamma$ is said to be
a \emph{valuation} of $K$ if the following two conditions are satisfied:

\begin{enumerate}
\item $v(fg) = v(f) + v(g)$ for all $f$ and $g$;

\item $v(f + g) \ge \min\{v(f), v(g)\}$, for all $f$ and $g$ (with $f+g \neq
0$).
\end{enumerate}

By the same axioms one could define the notion of valuation of a ring \cite%
{Danilov}. (In such a case $\Gamma$ could be a totally ordered abelian
monoid.) Perhaps the simplest and most familiar example of a valuation in
algebraic geometry is the order of an $n$-variate polynomial over a field $k$
at a point $a \in k^n$. That is, the mapping $\mathrm{ord}_a~:~k[x_1,
\ldots, x_n] - \{0\}~\rightarrow~\mathbb{N}$ defined by 
\begin{equation*}
\mathrm{ord}_a(f) = \mbox{the order of}~f~\mbox{at}~a
\end{equation*}
is a valuation of the ring $k[x_1, \ldots, x_n]$. We could extend the
definition of $\mathrm{ord}_a$ to the multiplicative group of the rational
function field $K = k(x_1, \ldots, x_n)$, defining $\mathrm{ord}_a~:~K -
\{0\}~\rightarrow~\mathbb{Z}$ by 
\begin{equation*}
\mathrm{ord}_a(f/g) = \mathrm{ord}_a(f) - \mathrm{ord}_a(g).
\end{equation*}

Let $n \ge 1$. Recall that the \emph{lexicographic order} $\le_l$ on $%
\mathbb{N}^n$ is defined by $v = (v_1, \ldots, v_n) \le_l (w_1, \ldots, w_n)
= w$ if and only if either $v = w$ or there is some $i$, $1 \le i \le n$,
with $v_j = w_j$, for all $j$ in the range $1 \le j < i$, and $v_i < w_i$.
Then $\le_l$ is an \emph{admissible} order on $\mathbb{N}^n$ in the sense of 
\cite{BWK:98}. Indeed $\mathbb{N}^n$, together with componentwise addition
and $\le_l$, forms a totally ordered abelian monoid. The lexicographic order 
$\le_l$ can be defined similarly on $\mathbb{Z}^n$, forming a totally
ordered abelian group.\newline

\begin{definition}
Let $k$ be a field. Let $f(x_1, \ldots, x_n)$ be a nonzero element of the
polynomial ring $k[x_1, \ldots, x_n]$ and let $a = (a_1, \ldots, a_n) \in k^n
$. The \emph{Lazard valuation} $v_a(f)$ of $f$ at $a$ is the element $(v_1,
\ldots, v_n)$ of $\mathbb{N}^n$ least (with respect to $\le_l$) such that
the partial derivative $\partial^{v_1 + \cdots + v_n}f/\partial x_1^{v_1}
\cdots \partial x_n^{v_n}$ does not vanish at $a$.
\end{definition}

We remark that $v_a(f)$ could be defined equivalently to be the element $%
(v_1, \ldots, v_n)$ of $\mathbb{N}^n$ least (with respect to $\le_l$) such
that $f$ expanded about $a$ has a term $c(x_1 - a_1)^{v_1} \cdots (x_n -
a_n)^{v_n}$ with $c \neq 0$. (This is compatible with Lazard's definition in 
\cite{Lazard:94}; see also \cite{HongMcCallum:14a}.) Notice that $v_a(f) = (0,
\ldots, 0)$ if and only if $f(a) \neq 0$. We could extend the definition of $%
v_a$ to the multiplicative group of the rational function field $K = k(x_1,
\ldots, x_n)$, defining $v_a~:~K - \{0\}~\rightarrow~\mathbb{Z}$ by 
\begin{equation*}
v_a(f/g) = v_a(f) - v_a(g).
\end{equation*}
However we shall not need such an extension of the concept for the time
being.

\begin{example}
Let $n = 1$. Then $v_a(f)$ is the familiar order $\mathrm{ord}_a(f)$ of $f$
at $a$. Thus, for instance, if $f(x_1) = x_1^2 - x_1^3$ then $v_0(f) = 2$
and $v_1(f) = 1$.\newline
Let $n = 2$ and $f(x_1, x_2) = x_1 x_2$. Then $v_{(0,0)}(f) = (1,1)$; $%
v_{(1,0)}(f) = (0,1)$; and $v_{(0,1)}(f) = (1,0)$.
\end{example}

Where there is no ambiguity we shall usually omit the qualifier ``Lazard''
from ``Lazard valuation''. We state some basic properties of the valuation $%
v_a(f)$, analogues of properties of the familiar order $\mathrm{ord}_a(f)$.
The first property is the satisfaction of the axioms.

\begin{proposition}
Let $f$ and $g$ be nonzero elements of $k[x_1, \ldots, x_n]$ and let $a \in
k^n$. Then $v_a(fg) = v_a(f) + v_a(g)$ and $v_a(f + g) \ge_l \min\{v_a(f),
v_a(g)\}$ (if $f + g \neq 0$).
\end{proposition}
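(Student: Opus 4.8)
The plan is to prove the two valuation axioms for $v_a$ by exploiting the equivalent characterization given in the remark: $v_a(f)$ is the $\le_l$-least exponent vector $(v_1,\ldots,v_n)$ for which the Taylor expansion of $f$ about $a$ has a nonzero coefficient. Writing $y_i = x_i - a_i$, I would identify $v_a(f)$ with the $\le_l$-least exponent appearing in the expansion $f = \sum_\alpha c_\alpha\, y^\alpha$ (where $y^\alpha = y_1^{\alpha_1}\cdots y_n^{\alpha_n}$). The key structural fact, which I would state first, is that $\le_l$ is an admissible (monomial) order: it is a total order compatible with addition on $\mathbb{N}^n$, and $(0,\ldots,0)$ is its least element. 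This reduces both axioms to standard facts about the leading term with respect to an admissible order.

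For the additive property $v_a(fg) = v_a(f) + v_a(g)$, I would translate it into the statement that the $\le_l$-least exponent of a product of Taylor series is the sum of the $\le_l$-least exponents of the factors. The forward inclusion is easy: the product of the two lowest terms $c_{v_a(f)}\,y^{v_a(f)}$ and $d_{v_a(g)}\,y^{v_a(g)}$ contributes the monomial $y^{v_a(f)+v_a(g)}$, so $v_a(fg) \le_l v_a(f)+v_a(g)$. The substance is showing this term cannot be cancelled: if $c_\alpha d_\beta$ is any nonzero product term whose exponent $\alpha+\beta$ is $\le_l v_a(f)+v_a(g)$, then $\alpha \ge_l v_a(f)$ and $\beta \ge_l v_a(g)$ force, by compatibility of $\le_l$ with addition, $\alpha+\beta \ge_l v_a(f)+v_a(g)$, hence $\alpha+\beta = v_a(f)+v_a(g)$; and the total-order property of $\le_l$ then pins down $\alpha = v_a(f)$, $\beta = v_a(g)$ uniquely (any deviation $\alpha >_l v_a(f)$ would, after adding, strictly exceed the sum). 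So the coefficient of $y^{v_a(f)+v_a(g)}$ in $fg$ is exactly the single nonzero product $c_{v_a(f)} d_{v_a(g)}$, which is nonzero since $k$ is a field (hence an integral domain). This is the step I expect to be the main obstacle, since it is where admissibility of the order and the integral-domain property both enter, and where one must rule out cancellation carefully.

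For the superadditivity $v_a(f+g) \ge_l \min\{v_a(f), v_a(g)\}$, the argument is shorter. Let $m = \min_{\le_l}\{v_a(f), v_a(g)\}$. In the Taylor expansion of $f+g$, every exponent strictly $\le_l$-less than $m$ has coefficient $0$ in both $f$ and $g$ separately (by definition of $v_a$ as the least exponent with nonzero coefficient), hence coefficient $0$ in the sum. Therefore the least exponent with a possibly nonzero coefficient in $f+g$ is $\ge_l m$, giving $v_a(f+g) \ge_l m$ whenever $f+g \neq 0$. No integral-domain hypothesis is needed here, reflecting the fact that addition can only cancel, never create, low-order terms.

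I would close by remarking that these two properties show $v_a$ satisfies the valuation axioms on the monoid $k[x_1,\ldots,x_n]\setminus\{0\}$ with values in the totally ordered monoid $(\mathbb{N}^n, +, \le_l)$, exactly as in the abstract definition recalled earlier, and that the extension to $K = k(x_1,\ldots,x_n)$ via $v_a(f/g) = v_a(f) - v_a(g)$ is then well-defined by the multiplicativity just proved.
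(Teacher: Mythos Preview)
Your proof is correct and is essentially a detailed unpacking of the paper's one-line argument. The paper simply asserts that both claims ``follow since $\mathbb{N}^n$, together with componentwise addition and $\le_l$, forms a totally ordered abelian monoid,'' whereas you spell out concretely how the admissibility of $\le_l$ (total order, strict compatibility with addition) forces the lowest exponent of a product to be the sum of the lowest exponents, with no cancellation possible because $k$ is an integral domain, and why no term below the minimum can survive in a sum. Your version is the same approach made explicit; the gain is a self-contained argument that does not presuppose familiarity with leading-term lemmas for admissible monomial orders, at the cost of some length.
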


\begin{proof}
These claims follow since $\mathbb{N}^n$, together with componentwise
addition and $\le_l$, forms a totally ordered abelian monoid.
\end{proof}

\vspace{0.5cm} For the remaining properties we state we shall assume that $k
= \mathbb{R}$ or $\mathbb{C}$.

\begin{proposition}
(Upper semicontinuity of valuation) Let $f$ be a nonzero element of $k[x_1,
\ldots, x_n]$ and let $a \in k^n$. Then there exists a neighbourhood $V
\subset k^n$ of $a$ such that for all $b \in V$ $v_b(f) \le_l v_a(f)$.
\end{proposition}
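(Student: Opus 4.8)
The plan is to exploit the defining property of the Lazard valuation directly: the single mixed partial derivative whose multi-index is $w := v_a(f)$ is by definition nonzero at $a$, and since polynomials are continuous this nonvanishing persists throughout a neighbourhood of $a$. No information about the lower-order derivatives of $f$ will be needed, which is exactly what makes an \emph{upper} bound (rather than a two-sided estimate) straightforward.

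First I would set $w = (w_1, \ldots, w_n) := v_a(f)$ and form the polynomial
\[
g := \frac{\partial^{\,w_1 + \cdots + w_n} f}{\partial x_1^{w_1} \cdots \partial x_n^{w_n}}.
\]
By the definition of $v_a(f)$ we have $g(a) \neq 0$. Since $g \in k[x_1, \ldots, x_n]$ and $k = \mathbb{R}$ or $\mathbb{C}$, the polynomial $g$ defines a continuous function on $k^n$, so the set $\{\, b \in k^n : g(b) \neq 0 \,\}$ is open and contains $a$. I would take $V$ to be this set (or any neighbourhood of $a$ contained in it).

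The one step requiring a little care is showing that $v_b(f) \le_l w$ for every $b \in V$. The argument is simply that $w$ is itself a multi-index whose associated derivative of $f$ fails to vanish at $b$, because that derivative is precisely $g$ and $g(b) \neq 0$ by choice of $V$. Since $v_b(f)$ is by definition the $\le_l$-least multi-index with a nonvanishing derivative at $b$, and $w$ is one such multi-index, the least one can be no larger: $v_b(f) \le_l w = v_a(f)$.

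The main (and essentially only) subtlety is the logical direction of this final step. I am \emph{not} claiming that the derivatives of $f$ of index strictly below $w$ continue to vanish on $V$ — in general they need not, which is why valuation can only decrease and not increase near $a$. The point is rather that the persistence of the single witness $w$ at every $b \in V$ already forces $v_b(f)$ down to at most $w$, giving upper semicontinuity with respect to $\le_l$.
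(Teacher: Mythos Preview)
Your proof is correct and follows essentially the same approach as the paper: both use the definition of $v_a(f)$ to obtain a single nonvanishing mixed partial derivative at $a$, invoke continuity of that polynomial derivative to get a neighbourhood $V$ on which it remains nonzero, and then conclude $v_b(f) \le_l v_a(f)$ from the minimality in the definition of $v_b(f)$. Your additional remark clarifying why only an upper bound is obtained is a nice expository touch not present in the paper's terser version.
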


\begin{proof}
Let $v_a(f) = (v_1, \ldots, v_n)$. By definition of $v_a(f)$, $\partial^{v_1
+ \cdots + v_n}f/\partial x_1^{v_1} \cdots \partial x_n^{v_n}(a) \neq 0$. By
continuity of the function $\partial^{v_1 + \cdots + v_n}f/\partial
x_1^{v_1} \cdots \partial x_n^{v_n}$ at $a$, there exists a neighbourhood $V
\subset k^n$ of $a$ such that for all $b \in V$, $\partial^{v_1 + \cdots +
v_n}f/\partial x_1^{v_1} \cdots \partial x_n^{v_n}(b) \neq 0$. It follows at
once by definition of the valuation that for all $b \in V$ $v_b(f) \le_l
v_a(f)$.
\end{proof}

Let $f \in k[x_1, \ldots, x_n]$. We shall say that $f$ is \emph{%
valuation-invariant} in a subset $S$ of $k^n$ if $v_a(f)$ is constant as $a$
varies in $S$.

\begin{proposition}
Let $f$ and $g$ be nonzero elements of $k[x_1, \ldots, x_n]$ and let $S
\subset k^n$ be connected. Then $fg$ is valuation-invariant in $S$ if and
only if both $f$ and $g$ are valuation-invariant in $S$.
\end{proposition}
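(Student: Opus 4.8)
The plan is to treat the two implications separately, with the reverse direction being the substantive one. For the forward implication --- if both $f$ and $g$ are valuation-invariant in $S$ then so is $fg$ --- I would simply invoke the multiplicativity established in Proposition 1: if $v_a(f)$ and $v_a(g)$ are each constant as $a$ ranges over $S$, then $v_a(fg) = v_a(f) + v_a(g)$ is constant as well. This direction uses neither connectedness nor upper semicontinuity.

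For the converse, suppose $fg$ is valuation-invariant, say $v_a(fg) = \gamma$ for all $a \in S$. The strategy is to show that the maps $a \mapsto v_a(f)$ and $a \mapsto v_a(g)$ are locally constant on $S$ and then appeal to connectedness. Fix $a \in S$. Applying the upper semicontinuity of Proposition 2 to $f$ and to $g$, I obtain a neighbourhood $V$ of $a$ such that $v_b(f) \le_l v_a(f)$ and $v_b(g) \le_l v_a(g)$ for all $b \in V$. For $b \in V \cap S$ we also have, by multiplicativity together with the hypothesis, $v_b(f) + v_b(g) = \gamma = v_a(f) + v_a(g)$.

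The key step is to deduce from these relations that in fact $v_b(f) = v_a(f)$ and $v_b(g) = v_a(g)$. Here I would use that $(\mathbb{N}^n, +, \le_l)$ is a totally ordered, cancellative abelian monoid, so that addition is strictly order-preserving: were $v_b(f) <_l v_a(f)$ strict, then adding the inequality $v_b(g) \le_l v_a(g)$ would give $v_b(f) + v_b(g) <_l v_a(f) + v_a(g)$, contradicting the equality of the sums. Hence $v_b(f) = v_a(f)$, and then cancellation in the sum yields $v_b(g) = v_a(g)$; so both $v(f)$ and $v(g)$ are constant on $V \cap S$, and the two maps are locally constant on $S$.

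Finally, a locally constant map on a connected set is constant, so $v_a(f)$ and $v_a(g)$ are each constant on $S$; that is, $f$ and $g$ are valuation-invariant in $S$. I expect the main obstacle to be the order-theoretic step: correctly arguing that strict monotonicity of addition in the monoid forces \emph{both} local values to be unchanged, rather than merely bounding their sum --- this is precisely where the totally ordered abelian monoid structure (with cancellation) is indispensable. The role of connectedness is confined to the final, standard passage from local constancy to global constancy.
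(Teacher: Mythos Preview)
Your proof is correct and follows essentially the same approach the paper indicates: the paper simply refers to the analogous Lemma~A.3 of \cite{McCallum:88} and notes that the argument uses Propositions~2.3 and~2.4 (multiplicativity and upper semicontinuity), which is precisely what you do --- multiplicativity for the easy direction, and upper semicontinuity plus strict monotonicity of addition in the totally ordered cancellative monoid $(\mathbb{N}^n,+,\le_l)$ to obtain local constancy, then connectedness, for the hard direction. In short, you have written out in full the proof the paper only sketches by citation.
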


\begin{proof}
This proposition is analogous to Lemma A.3 of \cite{McCallum:88}. Its proof,
using Propositions 2.3 and 2.4, is virtually identical to that of the cited
Lemma A.3.
\end{proof}

As noted previously, the above properties are analogues of those of the
familiar order $\mathrm{ord}_a(f)$. The next lemma is in a sense also an
analogue of the familiar order, and is particular to the case $n = 2$.

\begin{lemma}
Let $f(x,y) \in k[x,y]$ be primitive of positive degree in $y$ and
squarefree. Then for all but a finite number of points $(\alpha, \beta) \in
k^2$ on the curve $f(x,y) = 0$ we have $v_{(\alpha,\beta)}(f) = (0,1)$.
\end{lemma}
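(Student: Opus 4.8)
The plan is to characterize the Lazard valuation $(0,1)$ directly in terms of partial derivatives and then show that the failure of this condition is confined to finitely many points on the curve. Recall that $v_{(\alpha,\beta)}(f) = (0,1)$ means precisely that $f(\alpha,\beta) = 0$ (so $v_1 = 0$, and $(0,0)$ is excluded), together with $(\partial f/\partial y)(\alpha,\beta) \neq 0$. Thus the lemma is equivalent to the assertion that for all but finitely many points $(\alpha,\beta)$ on the curve $f = 0$, the partial derivative $f_y = \partial f/\partial y$ does not vanish. The first step, then, is to reduce the statement to controlling the common zeros of $f$ and $f_y$.

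Next I would argue that, since $f$ is primitive of positive degree in $y$ and squarefree, $f$ and $f_y$ share no nonconstant common factor in $k[x,y]$. Here squarefreeness is the crucial input: if $f$ had a repeated irreducible factor $p$, then $p$ would divide both $f$ and $f_y$, but squarefreeness rules this out, and $f_y \neq 0$ because $f$ has positive degree in $y$ (in characteristic zero, as $k = \mathbb{R}$ or $\mathbb{C}$). Consequently $\gcd(f, f_y) = 1$ in $k[x,y]$, and I would invoke the standard resultant argument: the resultant $\mathrm{Res}_y(f, f_y)$ is a nonzero polynomial in $x$ alone, hence has only finitely many roots $\alpha_1, \ldots, \alpha_m$. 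For every $\alpha$ outside this finite set, $f(\alpha, y)$ and $f_y(\alpha, y)$ have no common root in $y$, so no point $(\alpha, \beta)$ on the curve can satisfy $f_y(\alpha,\beta) = 0$.

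It then remains to handle the finitely many bad values $\alpha = \alpha_i$. For each such $\alpha_i$, the fibre $\{(\alpha_i, \beta) : f(\alpha_i,\beta) = 0\}$ contains only finitely many points, provided the leading coefficient of $f$ in $y$ does not vanish at $\alpha_i$ (so that $f(\alpha_i, y)$ is a nonzero univariate polynomial in $y$). Primitivity guarantees that the content of $f$ is trivial, so the leading coefficient and the lower coefficients cannot all vanish simultaneously at a common $\alpha$; hence $f(\alpha_i, y)$ is a nonzero polynomial with finitely many roots. Combining the two cases, the set of points $(\alpha,\beta)$ on the curve at which $v_{(\alpha,\beta)}(f) \neq (0,1)$ is contained in a finite union of finite fibres, hence is finite.

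I expect the main obstacle to be the careful treatment of the exceptional $x$-values, specifically ensuring via primitivity that $f(\alpha_i, y)$ does not degenerate to the zero polynomial, so that each bad fibre genuinely contributes only finitely many points. A secondary point requiring care is the use of characteristic zero to guarantee $f_y \neq 0$ and that squarefreeness of $f$ forces $\gcd(f, f_y) = 1$; both facts are clean over $\mathbb{R}$ or $\mathbb{C}$ but would require the separability hypothesis in general. Once these are in place, the resultant computation and the finiteness conclusion are routine.
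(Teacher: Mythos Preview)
Your proposal is correct and mirrors the paper's own proof almost exactly: both reduce $v_{(\alpha,\beta)}(f)\neq(0,1)$ on the curve to $f_y(\alpha,\beta)=0$, use squarefreeness to conclude that $R(x)=\mathrm{res}_y(f,f_y)$ is nonzero so that only finitely many $\alpha$ are bad, and then use primitivity to ensure $f(\alpha_i,y)\not\equiv 0$ so that each bad fibre is finite. Your treatment is slightly more explicit about the role of characteristic zero and about why $\gcd(f,f_y)=1$, but the argument is the same.
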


\begin{proof}
Denote by $R(x)$ the resultant $\mathrm{res}_y(f, f_y)$ of $f$ and $f_y$
with respect to $y$. Then $R(x) \neq 0$ since $f$ is assumed squarefree. Let 
$(\alpha, \beta) \in k^2$, suppose $f(\alpha, \beta) = 0$ and assume that $%
v_{(\alpha,\beta)}(f) \neq (0,1)$. Then $f_y(\alpha,\beta) = 0$. Hence $%
R(\alpha) = 0$. So $\alpha$ belongs to the set of roots of $R(x)$, a finite
set. Now $f(\alpha, \beta) = 0$ and $f(\alpha, y) \neq 0$, since $f$ is
assumed primitive. So $\beta$ belongs to the set of roots of $f(\alpha, y)$,
a finite set.
\end{proof}

Let us consider the relationship between the concepts of order-invariance
and valuation-invariance for a subset $S$ of $k^n$. The concepts are the
same in case $n = 1$ because order and valuation are the same for this case.
For $n = 2$ order-invariance in $S$ does not imply valuation-invariance in $S
$. (For consider the unit circle $S$ about the origin in $k^2$. The order of 
$f(x,y) = x^2 + y^2 - 1$ at every point of $S$ is 1. The valuation of $f$ at
every point $(\alpha, \beta) \in S$ \emph{except} $(\pm 1,0)$ is $(0,1)$.
But $v_{(\pm 1,0)}(f) = (0,2)$.) However for $n = 2$
we can prove the following.

\begin{proposition}
Let $f \in k[x,y]$ be nonzero and $S \subset k^2$ be connected. If $f$ is
valuation-invariant in $S$ then $f$ is order-invariant in $S$.
\end{proposition}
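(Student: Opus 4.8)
The plan is to reduce the claim, via the multiplicativity of both invariants on connected sets, to the case of a single squarefree factor, and then to exploit the finiteness phenomenon that already underlies Lemma 2.6. First I would bring in the order-theoretic counterpart of Proposition 2.5, namely Lemma A.3 of \cite{McCallum:88}, which says that on a connected set a product is order-invariant if and only if each factor is. Writing $f = C(x)\prod_i p_i(x,y)^{e_i}$, where $C \in k[x]$ is the content and the $p_i$ are the distinct irreducible factors of positive degree in $y$ (each of which is primitive and squarefree), Proposition 2.5 shows that $C$ and every $p_i$ are valuation-invariant on $S$, and Lemma A.3 reduces the goal to proving order-invariance of each factor separately. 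Since $\mathrm{ord}_a(p^e) = e\,\mathrm{ord}_a(p)$, it suffices to treat two cases: (i) $f \in k[x]$, and (ii) $f$ primitive, squarefree, of positive degree in $y$.

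Case (i) is immediate: for $f = C(x)$ one has $v_a(f) = (\mathrm{ord}_{a_1}C,\,0)$ and $\mathrm{ord}_a(f) = \mathrm{ord}_{a_1}C$, so valuation-invariance and order-invariance both assert exactly that $\mathrm{ord}_{a_1}C$ is constant on $S$. For case (ii) I would first note that primitivity forces $v_a(f) = (0, v_2(a))$ for every $a$: since no factor $x - a_1$ divides $f$, the restriction $f(a_1,y)$ is not identically zero, so the first valuation component can never be positive, and $v_2(a)$ is precisely the order of $y \mapsto f(a_1,y)$ at $y = a_2$. Valuation-invariance then says $v_2(a)$ equals a constant $v_2$ on $S$, and I would split on its value.

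If $v_2 = 0$ then $f$ is nonvanishing on $S$ and the order is $0$; if $v_2 = 1$ then $f(a) = 0$ but $f_y(a) \neq 0$, so the expansion of $f$ about $a$ has no constant term yet has the nonzero term $f_y(a)\,(y-a_2)$, whence the order is $1$. In both situations the order is constant. The remaining sub-case $v_2 \ge 2$ is the decisive one, and is where I expect the genuine work to lie: here $f(a) = f_y(a) = 0$ for every $a \in S$, so $S$ lies in the exceptional set of Lemma 2.6. The proof of that lemma shows this set is finite, being contained in the zero set of the nonzero resultant $R(x) = \mathrm{res}_y(f, f_y)$ together with the finitely many $y$-roots of each $f(\alpha,y)$. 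A finite subset of $\mathbb{R}^2$ or $\mathbb{C}^2$ is discrete, so a connected subset of it has at most one point; hence $S$ is a single point (or empty) and $f$ is order-invariant on $S$ trivially.

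The point to watch throughout is that the multiplicative reduction is legitimate precisely because valuation (Proposition 2.5) and order (Lemma A.3) are simultaneously multiplicative on connected sets, and that passing to squarefree factors is exactly what guarantees $R \not\equiv 0$ and hence the finiteness of the exceptional set. For a non-squarefree $f$ the locus $\{f = f_y = 0\}$ can be an entire curve and the connectedness argument would collapse, so the squarefree reduction is not a convenience but the crux of the proof.
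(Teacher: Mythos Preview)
Your proof is correct and follows essentially the same route as the paper's: reduce via Proposition~2.5 and Lemma~A.3 of \cite{McCallum:88} to individual factors, then invoke the finiteness of Lemma~2.6 to handle the case of valuation $(0,v_2)$ with $v_2\ge 2$ by forcing $S$ to be a singleton. The only organizational difference is that the paper factors all the way into irreducibles (treating a univariate irreducible factor via the observation that it has simple roots, hence valuation $(1,0)$ and order~$1$), whereas you keep the content $C(x)$ intact and note directly that $v_a(C)=(\mathrm{ord}_{a_1}C,0)$; and the paper splits on ``$S$ singleton or not'' while you split on the value of $v_2$ --- but these are equivalent rearrangements of the same argument.
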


\begin{proof}
Assume that $f$ is valuation-invariant in $S$. Write $f$ as a product of
irreducible elements $f_i$ of $k[x,y]$. By Proposition 2.5 each $f_i$ is
valuation-invariant in $S$. We shall show that each $f_i$ is order-invariant
in $S$. Take an arbitrary factor $f_i$. If the valuation of $f_i$ in $S$ is $%
(0,0)$ then the order of $f_i$ throughout $S$ is $0$, hence $f_i$ is
order-invariant in $S$. So we may assume that the valuation of $f_i$ is
nonzero in $S$, that is, that $S$ is contained in the curve $f_i(x,y) = 0$.
Suppose first that $f_i$ has positive degree in $y$. Now the conclusion is
immediate in case $S$ is a singleton, so assume that $S$ is not a singleton.
Since $S$ is connected, $S$ is an infinite set. By Lemma 2.6 and
valuation-invariance of $f_i$ in $S$, we must have $v_{(\alpha,\beta)}(f_i)
= (0,1)$ for all $(\alpha,\beta) \in S$. Hence $f_i$ is order-invariant in $S
$ (since $\mathrm{ord} f_i = 1$ in $S$). Suppose instead that $f_i = f_i(x)$
has degree 0 in $y$. Since $f_i(x)$ is irreducible it has no multiple roots.
Therefore $v_{(\alpha,\beta)}(f_i) = (1,0)$ for all $(\alpha,\beta) \in S$.
Hence $f_i$ is order-invariant in $S$ (since $\mathrm{ord} f_i = 1$ in $S$).
The proof that $f_i$ is order-invariant in $S$ is finished. We conclude that 
$f$ is order-invariant in $S$ by Lemma A.3 of \cite{McCallum:88}.
\end{proof}

In Section 3 we shall provide an example indicating that Proposition
2.7 is not true for dimension greater than 2.

\section{Valuation-invariance and CAD}

Let $A$ be a set of elements of $\mathbb{Z}[x_1, \ldots, x_n]$. Recall that
an $A$-invariant CAD of $\mathbb{R}^n$ \cite{Collins:75,
Arnon_Collins_McCallum:84} is a partitioning of $\mathbb{R}^n$ into
connected subsets called cells compatible with the zeros of the elements of $%
A$. The output of a CAD algorithm applied to $A$ is a description of an $A$%
-invariant CAD $\mathcal{D}$ of $\mathbb{R}^n$. That is, $\mathcal{D}$ is a
decomposition of $\mathbb{R}^n$ determined by the roots of the elements of $A
$ over the cells of some cylindrical algebraic decomposition $\mathcal{D}%
^{\prime}$ of $\mathbb{R}^{n-1}$; each element of $A$ is sign-invariant
throughout every cell of $\mathcal{D}$.

In this section we first prove a result which implies, roughly speaking,
that many of the cells produced by a CAD algorithm applied to $A$ are
valuation-invariant with respect to each element of $A$.
Next we recall some more terminology, and the main claim, of
\cite{Lazard:94}.
We prove the main claim of \cite{Lazard:94} for $n \le 3$
under a slightly stronger hypothesis.

Recall the fundamental concept of delineability reviewed in
\cite{HongMcCallum:14a}.
Delineability ensures the cylindrical arrangement of the cells in a CAD.
Perhaps the most crucial part of the theory of CADs are theorems providing
sufficient conditions for delineability. Now we can state a result linking
delineability and valuation-invariance.

\begin{proposition}
Let $f \in \mathbb{R}[x,x_n]$ and let $S$ be a connected subset of $\mathbb{R%
}^{n-1}$. Suppose that $f$ is delineable on $S$. Then $f$ is
valuation-invariant in each section (and trivially each sector) of $f$ over $%
S$.
\end{proposition}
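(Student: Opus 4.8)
The plan is to reduce the statement to a single clean computation of the Lazard valuation at a point of a section, and then to read off invariance from the defining properties of delineability. The sector case is immediate: on a sector $f$ has no zero, so $f(b)\neq 0$ for every $b$ in the sector, whence $v_b(f)=(0,\ldots,0)$ by the remark following the definition; thus $f$ is (trivially) valuation-invariant there. It remains to treat an arbitrary section $\sigma$, which by delineability is the graph of a continuous function $\theta$ on $S$, i.e. $\sigma=\{(\alpha,\theta(\alpha)):\alpha\in S\}$, with $x_n=\theta(\alpha)$ a real root of $f(\alpha,x_n)$ for each $\alpha$.

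The heart of the argument is an observation about the lexicographic order. Fix $\alpha\in S$, put $\beta=\theta(\alpha)$ and $a=(\alpha,\beta)$, and let $m$ be the multiplicity of $\beta$ as a root of the univariate polynomial $f(\alpha,x_n)$ (finite and positive since $f(\alpha,x_n)\not\equiv 0$ by delineability). I claim that $v_a(f)=(0,\ldots,0,m)$, \emph{independently of the first $n-1$ coordinates}. Indeed, the only multi-indices $w\in\mathbb{N}^n$ with $w<_l(0,\ldots,0,m)$ are those of the form $(0,\ldots,0,j)$ with $0\le j<m$: any $w$ having a positive entry in one of its first $n-1$ positions is already $>_l(0,\ldots,0,m)$. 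For such $w=(0,\ldots,0,j)$ we have $\partial^{j}f/\partial x_n^{j}(a)=0$ by the definition of the multiplicity $m$, whereas $\partial^{m}f/\partial x_n^{m}(a)\neq 0$. Hence $(0,\ldots,0,m)$ is exactly the $\le_l$-least multi-index at which the corresponding partial derivative of $f$ fails to vanish at $a$, which is the definition of $v_a(f)$.

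With this formula in hand, valuation-invariance of $f$ along $\sigma$ amounts precisely to constancy of the root multiplicity $m=m(\alpha)$ as $\alpha$ ranges over $S$. This is where I would invoke the delineability hypothesis: by the notion of delineability reviewed in \cite{HongMcCallum:14a}, the multiplicity of each real root of $f(\alpha,x_n)$ is invariant along its section, so $m(\alpha)$ is constant on $S$, and therefore $v_{(\alpha,\theta(\alpha))}(f)=(0,\ldots,0,m)$ is constant on $\sigma$, as required.

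The step I expect to be the main obstacle is isolating exactly which feature of delineability delivers the constancy of $m$. Mere continuity of the root functions together with a constant number of distinct \emph{real} roots is not by itself enough: a conjugate pair of complex roots can collide with a real branch at an isolated parameter value, raising its multiplicity without changing the count of real branches (for instance $f=x_n^3+x_1^2 x_n$ has the single real section $x_n=0$, of multiplicity $1$ for $x_1\neq 0$ but $3$ at $x_1=0$). Consequently the proof must rely on the stronger form of delineability, in which the number of distinct complex roots — equivalently the multiplicity structure — is held fixed over $S$; verifying that the stated hypothesis supplies this, and hence forbids such collisions, is the one genuinely substantive point.
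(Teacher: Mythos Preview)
Your proof is correct and follows essentially the same approach as the paper: compute $v_{(\alpha,\theta(\alpha))}(f)=(0,\ldots,0,m)$ via the $x_n$-derivatives, then invoke the multiplicity-invariance clause of delineability to conclude constancy along the section, with the sector case handled trivially. Your explicit justification that the only multi-indices $<_l(0,\ldots,0,m)$ are of the form $(0,\ldots,0,j)$ with $j<m$, and your closing discussion of why mere constancy of the number of real roots would not suffice, are welcome elaborations, but the underlying argument is the same as the paper's.
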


\begin{proof}
Let $\theta~:~S \rightarrow \mathbb{R}$ be a real root function of $f$ on $S$
such that $\theta(a)$ has invariant multiplicity $m$ as a root of $f(a, x_n)$%
, as $a$ varies in $S$. ($m$ exists by the second condition of
delineability.) Let $a$ be an arbitrary point of $S$. Then $v_{(a,
\theta(a))} f = (0, \ldots, 0, m)$, since $\partial^m f / \partial x_n^m (a,
\theta(a)) \neq 0$ while $\partial^i f / \partial x_n^i (a, \theta(a)) = 0$
for all $i$ in the range $0 \le i < m$. We've shown that $f$ is
valuation-invariant in the section of $f$ over $S$ which is the graph of $%
\theta$. Observe that wherever $f(a, a_n) \neq 0$ we have $v_{(a, a_n)} f =
(0, \dots, 0)$. Hence $f$ is valuation-invariant in each sector of $f$ over $%
S$.
\end{proof}

An element $f \in \mathbb{Z}[x,x_n]$ is \emph{nullified} by a subset $S$ of $%
\mathbb{R}^{n-1}$ if $f(a, x_n) = 0$ for all points $a \in S$. In case a
cell $S$ is nullified by $f$ the original CAD algorithm does not decompose
the cylinder $S \times \mathbb{R}$ relative to $f$, since in such a case the
whole cylinder $S \times \mathbb{R}$ is sign-invariant with respect to $f$.
In \cite{Lazard:94} Lazard proposed an evaluation process for such an $f$
relative to a sample point $\alpha$ of such an $S$ which he claimed would
ensure that the cylinder $S \times \mathbb{R}$ can be decomposed into
cells which are valuation-invariant with respect to $f$.
This technique is described in slightly more general terms as follows:

\begin{definition}
[Lazard evaluation]
Let $K$ be a field.
(In this section $K = \mathbb{R}$ or, when explicit computation is required,
$K$ is a suitable subfield of $\mathbb{R}$.)
Let $n \ge 2$, $f \in K[x_1, \ldots, x_n]$ nonzero,
and $\alpha = (\alpha_1, \ldots, \alpha_{n-1}) \in K^{n-1}$.
The \emph{Lazard evaluation} $f_{\alpha}(x_n)$ of $f$ at $\alpha$ is 
defined to be the result of the following process
(which determines also nonnegative integers $v_i$, with $1 \le v_i \le n-1$):
\begin{enumerate}
\item[] $f_{\alpha} \leftarrow f$
\item[] For $i \leftarrow 1$ to $n-1$ do
  \begin{enumerate}
  \item[] $v_{i} \leftarrow$ the greatest integer $v$ 
          such that $(x_{i}-\alpha_{i})^{v}~|~f_{\alpha}$
  \item[] $f_{\alpha} \leftarrow f_{\alpha}/(x_{i}-\alpha_{i})^{v_{i}}$
  \item[] $f_{\alpha} \leftarrow f_{\alpha}(\alpha_{i},x_{i+1},\ldots,x_{n})$
  \end{enumerate}
\end{enumerate}
\end{definition}

\begin{remark}
With $K$, $n$, $f$, $\alpha$ and the $v_i$
as in the above definition of Lazard evaluation,
notice that $f(\alpha, x_n) = 0$ (identically) if and only if $v_i > 0$, for some $i$
in the range $1 \le i \le n-1$.
With $\alpha_n \in K$ arbitrary,
notice also that the integers $v_i$, with $1 \le v_i \le n-1$,
are the first $n-1$ coordinates of $v_{(\alpha, \alpha_n)}(f)$.
It will be handy on occasion to refer to the $(n-1)$-tuple
$(v_1, \ldots, v_{n-1})$ as the \emph{Lazard valuation of} $f$ \emph{on} $\alpha$.
\end{remark}

One more definition is needed before we can state Lazard's main claim:

\begin{definition}
[Lazard delineability]
Let $f\in R_n$ be nonzero and
$S$ a subset of $\mathbb{R}^{n-1}$.
We say that $f$ is \emph{Lazard delineable} on $S$ if
\begin{enumerate}
\item the Lazard valuation of $f$ on $\alpha$ is the same for each point
$\alpha \in S$;
\item there exist finitely many continuous functions
$\theta_1 < \cdots < \theta_k$ from $S$ to $\mathbb{R}$, with $k \ge 0$,
such that, for all $\alpha \in S$,
the set of real roots of $f_\alpha (x_n)$ is 
$\{\theta_1 (\alpha), \ldots, \theta_k (\alpha)\}$; and
\item there exist positive integers $m_1, \ldots, m_k$ such that,
for all $\alpha \in S$ and all $i$, $m_i$ is
the multiplicity of $\theta_i(\alpha)$ as a root of $f_{\alpha}(x_n)$.
\end{enumerate} 
We refer to the graphs of the $\theta_i$ as the \emph{Lazard sections} of $f$
over $S$; the regions between successive Lazard sections,
together with the region below the lowest Lazard section
and that above the highest Lazard section, are called \emph{Lazard sectors}.
\end{definition}

\begin{remark}
[Relation between Lazard and ordinary delineability]
Let $f$ and $S$ be as in the above definition of Lazard delineability.
Suppose that $f(\alpha, x_n) \neq 0$ for all $\alpha \in S$.
Then $f$ is Lazard delineable on $S$ if and only if 
$f$ is delineable on $S$ in the usual sense.
\end{remark}

For a finite irreducible basis in $\mathbb{Z}[x_1, \ldots, x_n]$, where $n \ge 2$,
recall that the \emph{Lazard projection} $P_L(A)$ of $A$ is the union
of the set of all leading coefficients of elements of $A$,
the set of all trailing coefficients of elements of $A$,
the set of all discriminants of elements of $A$, and
the set of all resultants of pairs of distinct elements of $A$ 
\cite{HongMcCallum:14a}.
Lazard's main claim, essentially the content of his Proposition 5
and subsequent remarks, could be expressed as follows:

\begin{center}
\medskip
\parbox{11cm}{
Let $A$ be a finite irreducible basis in 
$\mathbb{Z}[x_1, \ldots, x_n]$, where $n \ge 2$.
Let $S$ be a connected subset of $\mathbb{R}^{n-1}$.
Suppose that each element of $P_L(A)$ is valuation-invariant in $S$.
Then each element of $A$ is Lazard delineable on $S$,
the Lazard sections over $S$ of the elements of $A$ are pairwise disjoint,
and each element of $A$ is valuation-invariant in every Lazard section
and sector over $S$ so determined.
}
\medskip
\end{center}

This claim concerns valuation-invariant lifting in relation to $P_L(A)$:
it asserts that the condition, `each element of $P_L(A)$ is valuation-invariant
in $S$', is sufficient for an $A$-valuation-invariant stack
in $\mathbb{R}^n$ to exist over $S$.

\begin{theorem}
Suppose $n \le 3$ and $S$ is a submanifold of $\mathbb{R}^{n-1}$.
Then Lazard's main claim holds.
\end{theorem}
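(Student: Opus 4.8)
The plan is to reduce the statement to three local claims, for each $f \in A$ and each pair of distinct $f, g \in A$: Lazard delineability of $f$ on $S$, valuation-invariance of $f$ in each resulting Lazard section and sector, and disjointness of the sections of $f$ and $g$. The decisive structural fact is that, since $n \le 3$, every element of $P_L(A)$ (leading and trailing coefficients, discriminants, resultants, all taken with respect to $x_n$) lies in $\mathbb{R}[x_1, \ldots, x_{n-1}]$ with $n - 1 \le 2$ variables. Hence Proposition 2.7 applies to the connected submanifold $S$ (the case $n = 2$ being trivial, since valuation and order coincide in one variable), and the hypothesis that each element of $P_L(A)$ is valuation-invariant in $S$ upgrades to the statement that each element of $P_L(A)$ is order-invariant in $S$. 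This is the only point at which the restriction $n \le 3$ is essential, and it converts the problem into one about order-invariant lifting, for which the machinery of \cite{McCallum:98} and \cite{HongMcCallum:14a} is available. First I would dispose of the case where $S$ is a single point, in which valuation-invariance and Lazard delineability hold automatically.

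Next I would separate the elements of $A$ by their degree in $x_n$. If $\deg_{x_n} f = 0$ then $f \in \mathbb{Z}[x_1, \ldots, x_{n-1}]$ is its own leading coefficient, so $f \in P_L(A)$ and $f$ is valuation-invariant in $S$ by hypothesis; the Lazard evaluation $f_\alpha(x_n)$ is then a nonzero constant, so $f$ has no Lazard sections, and since $f$ does not involve $x_n$ its valuation at $(\alpha, a_n)$ equals $(v_\alpha(f), 0)$ independently of $a_n$, which gives valuation-invariance throughout the single Lazard sector $S \times \mathbb{R}$. If instead $\deg_{x_n} f > 0$ then $f$, being irreducible, is primitive in $x_n$, so the greatest common divisor of its $x_n$-coefficients is $1$; a standard resultant/dimension argument then shows these coefficients have only finitely many common complex zeros, whence $f$ is not nullified on any positive-dimensional submanifold $S$. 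By Remark 2, Lazard delineability of such an $f$ on $S$ coincides with ordinary delineability, and once delineability is established Proposition 3.1 immediately yields valuation-invariance of $f$ in every section and sector.

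It therefore remains to prove ordinary delineability of each non-nullified $f$ on $S$ from the order-invariance of $\mathrm{disc}_{x_n}(f)$ and of the leading and trailing coefficients of $f$. When $S$ is two-dimensional (open in $\mathbb{R}^2$), order-invariance forces each such polynomial to be either identically zero, which is impossible on an open set, or nowhere zero on $S$; thus $\mathrm{disc}_{x_n}(f)$ and the leading coefficient do not vanish on $S$, so $f(\alpha, x_n)$ has constant degree and is squarefree for every $\alpha \in S$, and delineability follows from the delineability theorem of \cite{McCallum:98}. The main obstacle is the one-dimensional case, where the leading coefficient may vanish on all of $S$ (order-invariantly), so that $f(\alpha, x_n)$ suffers a degree drop and roots may escape to infinity; here McCallum's theorem does not apply directly, since degree-invariance can fail and the intermediate coefficients of $f$ are not controlled by $P_L(A)$. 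This degree-drop locus is exactly the situation responsible for the gap in \cite{Lazard:94}. To handle it I would exploit the trailing coefficient, which also belongs to $P_L(A)$ and is hence order-invariant: passing to the reciprocal polynomial $x_n^{\deg_{x_n} f}\, f(x, 1/x_n)$, whose leading coefficient is the trailing coefficient of $f$, trades roots at infinity for roots at the origin, after which the order-invariant Lazard-delineability results of \cite{HongMcCallum:14a}, which are complete for CAD construction in dimension $n \le 3$, deliver delineability of $f$ on $S$.

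Finally I would establish disjointness: for distinct $f, g \in A$ the resultant $\mathrm{res}_{x_n}(f, g)$ lies in $P_L(A)$ and is order-invariant, hence either nowhere zero on $S$, in which case $f(\alpha, x_n)$ and $g(\alpha, x_n)$ share no root and the corresponding Lazard sections are disjoint, or identically zero on $S$, a degenerate case that must be examined separately. With delineability and disjointness in hand, Proposition 3.1, together with the direct argument already given for the $x_n$-free factors, yields valuation-invariance of every element of $A$ in every Lazard section and sector over $S$. I expect the one-dimensional degree-drop case of the third paragraph to be the crux, both because it is where the historical gap lies and because it is the only point at which the weaker information carried by Lazard's reduced projection, namely leading and trailing coefficients rather than all coefficients, must genuinely be leveraged.
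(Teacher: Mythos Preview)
Your overall architecture matches the paper's: use Proposition~2.7 (together with the one-variable base case where order and valuation coincide) to upgrade valuation-invariance of $P_L(A)$ on $S$ to order-invariance, dispose of the zero-dimensional case, invoke a delineability theorem, and then apply Proposition~3.1 for valuation-invariance in the resulting sections and sectors. Where you diverge is in the middle step, which you make much harder than it is. The paper simply cites Theorem~3.1 of \cite{HongMcCallum:14a}: order-invariance of every element of $P_L(A)$ in a connected submanifold $S$ already implies that each element of $A$ either vanishes identically on $S$ or is delineable on $S$, \emph{and} that the sections over $S$ of the elements of $A$ are pairwise disjoint. Combined with the primitivity argument (Lemma~A.2 of \cite{McCallum:88}) ruling out identical vanishing on positive-dimensional $S$, this finishes the proof in a few lines. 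No reciprocal-polynomial manoeuvre is needed: the one-dimensional degree-drop situation you single out as ``the crux'' is already absorbed by that theorem, and section disjointness---including your ``degenerate case'' where $\mathrm{res}_{x_n}(f,g)$ vanishes identically on $S$, which you leave unresolved---comes for free from the same citation.

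Two smaller points. First, the historical gap in \cite{Lazard:94} is not the degree-drop phenomenon on one-dimensional cells; it concerns the attempted extension of the valuation to multivariate Puiseux series (see Section~4 of the present paper), which is orthogonal to this theorem. Second, your case $\deg_{x_n} f = 0$ does not arise under the usual conventions for an irreducible basis $A$ in the projection context (discriminants and trailing coefficients presuppose positive degree in $x_n$), and the paper accordingly does not treat it.
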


\begin{proof}
Suppose first that $n = 2$.
By remarks in Example 2.2, the hypothesis implies that
each element of $P_L(A)$ is order-invariant in $S$.
Since $n = 2$ and each element of $A$ is irreducible
(hence in particular primitive), no element of $A$ vanishes identically at a point 
of $S$. 
Hence by Theorem 3.1 of \cite{HongMcCallum:14a},
each element of $A$ is delineable on $S$ etc.
By a remark above, Lazard delineability
is equivalent to ordinary delineability for $f$ on $S$ in this case.
The valuation-invariance of each element of $A$ in every section and sector
over $S$ determined by $A$ follows by Proposition 3.1.

Suppose second that $n = 3$.
The conclusions are essentially trivial in case the dimension of $S$ is 0.
So assume henceforth that the dimension of $S$ is positive.
By the hypothesis and Proposition 2.7, each element of $P_L(A)$ 
is order-invariant in $S$.
Hence by Theorem 3.1 of \cite{HongMcCallum:14a},
each element of $A$ either vanishes identically on $S$ or is delineable on $S$.
Since the dimension of $S$ is positive and each element of $A$ is irreducible
(hence in particular primitive), no element of $A$ vanishes identically
on $S$ (Lemma A.2 of \cite{McCallum:88}).
Hence each element of $A$ is delineable on $S$ and (again by Theorem 3.1
of \emph{op. cit.}) the
sections over $S$ of the elements of $A$ are pairwise disjoint.
By a remark above, Lazard delineability
is equivalent to ordinary delineability for $f$ on $S$ in this case.
The valuation-invariance of each element of $A$ in every section and sector
over $S$ determined by $A$ follows by Proposition 3.1.
\end{proof}

Now we present an example showing that valuation-invariance does not imply
order-invariance when $n > 2$. Let $f(x,y,z) = x z - y^2$ and let $S$ be the 
$z$-axis in $\mathbb{R}^3$. Now $f$ is valuation-invariant in $S$, since the
valuation of $f$ at each point of $S$ is $(0,2,0)$. But $f$ is not
order-invariant in $S$, since $\mathrm{ord}_{(0,0,0)} f = 2$ and $\mathrm{ord%
}_{(0,0,\alpha)} f = 1$ for $\alpha \neq 0$. To our minds this example casts
some doubt on the truth of Lazard's main claim for $n > 3$. However we do not
have a direct counter-example for Lazard's main claim.

\section{Extension of Lazard's valuation: some difficulties}

Let $k$ be $\mathbb{R}$ or $\mathbb{C}$. In this subsection we discuss some
of the problems with trying to extend the definitions and basic theory of
valuations of $k[x_1, \ldots, x_n]$, as outlined in Section 2, to
larger rings and fields. We first observe that both the order and Lazard
valuation could just as easily be defined in the same manner for nonzero
elements of the ring of analytic functions defined in some open set $U
\subset k^n$. The basic theory (Propositions 2.3 to 2.5) carries over with
very little change.

Now let $a = (a_1, \ldots, a_n) \in k^n$. We consider the formal power
series ring 
\begin{equation*}
R = k[[x - a]] = k[[x_1 - a_1, \ldots, x_n - a_n]].
\end{equation*}
We could define the order $\mathrm{ord}_a(f)$ of $f \in R$ at $a$ in the
usual way. However -- unless $f$ is assumed to be convergent in some
neighbourhood of $a$ (i.e. analytic near $a$, discussed above) -- it is not
in general possible to expand $f$ about $b \neq a$ but near $a$. In
particular analogues of Propositions 2.4 and 2.5 have no meaning in general
in this case. Similar remarks apply to the Lazard valuation in this case.
This elementary consideration points to the need to carefully incorporate
some notion of convergence if one wishes to rectify the flawed theory
relating to Lazard's valuation for multivariate Puiseux series identified in
Section 2 of \cite{HongMcCallum:14a}. 
Yet the convergence issue for multivariate Puiseux series is
thorny -- a given such series in $x - a$ may have a slender region of
convergence (neither a full nor punctured neighbourhood of $a$).

Next we consider extension of the valuations $\mathrm{ord}_a$ and $v_a$ to
the field of rational functions $K = k(x_1, \ldots, x_n)$. As mentioned in
Section 2, $\mathrm{ord}_a$ could be defined by the equation $\mathrm{%
ord}_a(f/g) = \mathrm{ord}_a(f) - \mathrm{ord}_a(g)$, for $f$ and $g$
nonzero elements of $K$. Proposition 2.3 remains valid for this extension
but Proposition 2.4 does not (as we could take $n = 1$, $a_1 = 0$, $f = 1$
and $g = x_1$). However we could formulate the following analogue of
Proposition 2.4:

\begin{proposition}
Let $f/g$ be a nonzero element of $K$, let $U \subset k^n$ be an open set
throughout which $g \neq 0$, and let $a \in U$. Then there exists a
neighbourhood $V \subset U$ of $a$ such that for all $b \in V$ $\mathrm{ord}%
_b(f/g) \le \mathrm{ord}_a(f/g)$.
\end{proposition}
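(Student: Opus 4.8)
The plan is to reduce the claim about the rational function $f/g$ to the already-established upper semicontinuity of the order valuation on the polynomial ring (Proposition~2.4), exploiting the additivity axiom (Proposition~2.3) and the fact that $g$ is invertible near $a$.

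First I would write $\mathrm{ord}_b(f/g) = \mathrm{ord}_b(f) - \mathrm{ord}_b(g)$ for every $b \in U$, which is legitimate since both $f$ and $g$ are nonzero polynomials and $\mathrm{ord}_b$ extends to $K$ by exactly this formula. The key simplification is that $g$ does not vanish anywhere in $U$: for every $b \in U$ we have $g(b) \neq 0$, hence $\mathrm{ord}_b(g) = 0$. Consequently $\mathrm{ord}_b(f/g) = \mathrm{ord}_b(f)$ for all $b \in U$; the troublesome denominator simply contributes nothing to the order once we stay inside the region where $g$ is nonvanishing. (This is precisely what breaks in the counterexample cited just before the statement, where $g = x_1$ vanishes at $a = 0$.)

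Next I would apply Proposition~2.4 to the polynomial $f$ at the point $a$, obtaining a neighbourhood $V_0 \subset k^n$ of $a$ with $\mathrm{ord}_b(f) \le \mathrm{ord}_a(f)$ for all $b \in V_0$. Setting $V = V_0 \cap U$, which is again an open neighbourhood of $a$ contained in $U$, I would conclude that for all $b \in V$,
\begin{equation*}
\mathrm{ord}_b(f/g) = \mathrm{ord}_b(f) \le \mathrm{ord}_a(f) = \mathrm{ord}_a(f/g),
\end{equation*}
where the outer equalities use $\mathrm{ord}_b(g) = \mathrm{ord}_a(g) = 0$ on $U$, and the middle inequality is Proposition~2.4. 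This is exactly the desired upper-semicontinuity statement.

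I do not anticipate a genuine obstacle here; the only point requiring care is the observation that $g$ being nonvanishing throughout the \emph{open} set $U$ forces $\mathrm{ord}_b(g) = 0$ for \emph{all} $b$ in $U$, not merely at $a$. This uniform vanishing-of-the-order of the denominator is what lets the quotient inherit semicontinuity directly from the numerator, and it is the role played by the hypothesis ``$U$ is open and $g \neq 0$ throughout $U$.'' Everything else is a routine combination of the additivity of $\mathrm{ord}$ (Proposition~2.3) with the polynomial case (Proposition~2.4).
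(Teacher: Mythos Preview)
Your proof is correct and follows essentially the route the paper itself only sketches: the paper merely says that a direct proof adapting Proposition~2.4 is easy, and adds that the result is a special case of the analytic-function analogue of Proposition~2.4 (since $f/g$ is analytic on $U$ once $g$ is nonvanishing there). Your argument makes the first of these routes fully explicit, reducing to Proposition~2.4 for the numerator $f$ alone via the observation $\mathrm{ord}_b(g)=0$ on $U$; this is exactly in the spirit of what the paper indicates.
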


A direct proof (adapting that of Proposition 2.4) could easily be given. In
fact this is a special case of the analogue of Proposition 2.4 for the case
of analytic functions mentioned above.

Similar remarks and an analogue of the above proposition apply to the Lazard
valuation.

Now in case $k = \mathbb{C}$ there is an interesting relationship between
the rational function field $K$ and the field of all $n$-variate Puiseux
series $\mathbb{C}_n(x;a)$, defined in Section 2 of
\cite{HongMcCallum:14a}. Indeed there is a
natural embedding of $K$ into $\mathbb{C}_n(x;a)$, for every point $a$. So,
with a view toward rectifying the flawed theory relating to Lazard's
valuation for multivariate Puiseux series \cite{HongMcCallum:14a}, one might wish
to find a way to relax the hypotheses of the above proposition (for $v_a$)
without invalidating it. However it is by no means clear how to do this.

Finally we consider possible extension of the valuations and the associated
basic theory to fractional power series rings. Both the order and Lazard
valuation could be defined for an Abhyankar-Jung power series ring 
\cite{HongMcCallum:14a} such as $\mathbb{C}\{x_1, \ldots, x_{n-1},
x_n^{1/q}\}$. Moreover, it is conceivable that a suitable analogue of
Proposition 2.4 could be proved for a case like this. However further
extension -- to some suitable subring of $\mathbb{C}(x;a)$ which includes
all the desired roots, and in which one has a reasonable notion of
convergence -- would seem challenging in view of the difficulties noted
above.

\section{Conclusion, further work}

Mindful of the potential benefits of Lazard's approach to projection in \cite%
{Lazard:94}, yet conscious of the flawed justification provided, we embarked
on investigation of both the Lazard projection and valuation. 
In \cite{HongMcCallum:14a} we found that
Lazard's projection is valid for CAD construction for well-oriented
polynomial sets. The validity proof uses order-invariance instead of
valuation-invariance, and builds upon existing results \cite{McCallum:88,
McCallum:98, Brown:01} about improved projection. 
In this report we proved some basic
properties of Lazard's valuation in the special setting of real or complex
multivariate polynomials, and identified some relationships between
valuation-invariance and order-invariance.

Further work could usefully be done in a number of directions. It would be
desirable to extend the CAD algorithm, with improved projection, to apply to
non-well-oriented sets. It would be nice to have a more streamlined,
condensed account of the theory of improved projection for CAD, which is
currently scattered across several journal articles spanning nearly three
decades. It would be interesting to try to pursue further the notion and
theory of the Lazard valuation, which effort could yield some worthwhile
algorithmic and theoretical improvements. 
In particular, it would be worthwhile to try to prove Lazard's main claim
(with no restriction to $n \le 3$).
Examination of the other ideas
suggested in \cite{Lazard:94} could also be fruitful.

\section*{Acknowledgements}

We would like to acknowledge a grant from the
Korea Institute of Advanced Study (KIAS) which supported a visit to KIAS by
both authors in 2009. This visit made possible the start of our
collaboration on this work. Scott McCallum also acknowledges his University
for providing an Outside Studies Program External Fellowship in 2012.
 Hoon Hong acknowledges the partial support from the grant US NSF 1319632.

\end{document}